\newcommand{\R}{\mathds{R}}
\newcommand{\CP}{\mathds{C}\mathrm{P}}
\newcommand{\N}{\mathds{N}}
\newcommand{\Ric}{\mathrm{Ric}}
\newcommand{\C}{\mathds{C}}            
\newcommand{\de}{\partial}          
\newcommand{\K}{K\"{a}hler}
\newcommand{\OO}{{\mathcal{O}}}
\newcommand{\ov}[1]{\overline{#1}}
\newcommand{\om}{\omega}
\newcommand{\va}{\varphi}
\newcommand{\FR}{{\mathfrak{R}}}
\newcommand{\FF}{{\mathfrak{F}}}
\newcommand{\CG}{{\mathcal{G}}}
\newcommand{\CO}{{\mathcal{O}}}
\def\b{\beta}
\def\b1{{\rm id}}
\newtheorem{theor}{Theorem}[section]
\newtheorem{prop}[theor]{Proposition}
\newtheorem{lem}[theor]{Lemma}
\newtheorem{cor}[theor]{Corollary}
\newtheorem{remark}{Remark}
\begin{document}

\title[K\"{A}hler immersions of K\"{A}hler-Ricci solitons into complex space forms]{K\"{A}hler immersions of K\"{A}hler-Ricci solitons into  definite or indefinite  complex space forms}

\author{Andrea Loi}
\address{(Andrea Loi) Dipartimento di Matematica \\
         Universit\`a di Cagliari (Italy)}
         \email{loi@unica.it}

\author{Roberto Mossa}
\address{(Roberto Mossa) 
Instituto de Matem\'atica e Estatistica  \\
         Universidade de S\~ao Paulo (Brasil)}
         \email{robertom@ime.usp.br}

\thanks{
The first author  supported  by Prin 2015 -- Real and Complex Manifolds; Geometry, Topology and Harmonic Analysis -- Italy, by INdAM. GNSAGA - Gruppo Nazionale per le Strutture Algebriche, Geometriche e le loro Applicazioni,  by STAGE - Funded by Fondazione di Sardegna and Regione Autonoma della Sardegna and by KASBA- Funded by Regione Autonoma della Sardegna.\\
\hspace*{1.3ex}The second author was supported by
FAPESP (grant: 2018/08971-9)}

\subjclass[2000]{53C55, 32Q15.} 
\keywords{\K\ \ metrics, \K--Ricci solitons, \K-Einstein metrics; Calabi's diastasis function; complex space forms}

\begin{abstract}
Let   $(g, X)$ be  a  \K--Ricci soliton on a complex manifold $M$.
We prove that if the \K\ manifold $(M, g)$ can be \K\ immersed into a  definite or indefinite complex space form then  $g$ is Einstein.   Notice that there is  no  topological assumptions on the manifold $M$ and the \K\ immersion is not required to be injective. Our result  extends the result obtained in  \cite{BG} 
asserting   that  a KRS  on  a compact \K\  submanifold $M\subset \CP^N$  
 which is  a complete intersection  is KE. 
\end{abstract}
 
\maketitle

\tableofcontents  

\section{Introduction}
A {\em Kahler-Ricci soliton} (KRS) on a complex manifold $M$ is a pair $(g, X)$ consisting of a Kähler metric $g$ and a
holomorphic vector field $X$, called the {\em solitonic vector field}, such that 
\begin{equation}\label{eqkrsg}
\Ric_{g}=\lambda g+L_{X}g
\end{equation}
for some $\lambda \in \mathbb{R}$,
where $\Ric_{g}$ is the Ricci tensor of the metric $g$ and $L_Xg$ denotes the Lie derivative of $g$ with respect to $X$, i.e.
$$(L_Xg)(Y, Z)=X\left( g(Y, Z)\right)-g([X, Y], Z)-g(Y, [X, Z]),$$
for $Y$ and $Z$ vector fields on $M$.
A  \K\ metric $g$ satisfying \eqref{eqkrsg}
gives rise to special solutions of the \K-Ricci flow (see e.g. \cite{ch}),
namely they evolve under biholomorphisms.
KRS generalize \K--Einstein (KE) metrics. Indeed any
KE metric $g$ on a complex manifold $M$ gives rise to a
trivial KRS by choosing $X = 0$ or $X$ Killing with
respect to $g$. Obviously if the automorphism group of $M$ is
discrete then a \K--Ricci soliton $(g, X)$ is nothing but a
KE metric $g$.

The first examples of non-Einstein compact KRS go back to the
constructions of N. Koiso \cite{KOISO} and independently H. D. Cao \cite{CAO1} of \K\ metrics on certain
$\C P^1$-bundles over $\C P^n$.
After that, X. J. Wang and X. Zhu \cite{WAZHU} proved the existence of
a KRS on any compact toric Fano manifold, and this result was later generalized by F. Podestà
and A. Spiro
\cite{POSPIRO} to toric bundles over generalized flag manifolds.
The reader is referred to  \cite{tiansol1}, \cite{tiansol2},
\cite{tiansol3} for the existence and uniqueness of \K--Ricci
solitons on compact manifolds and to \cite{FI} for the noncompact
case.

In this paper we address the problem of studying   KRS which can be  \K\ immersed  into a definite or indefinite 
(finite dimensional) complex space form $(S, g_c)$
of constant holomorphic sectional curvature $2c$.
The main result of the paper is the following theorem which asserts that such a KRS is trivial.
\begin{theor}\label{mainteor}
Let $(g, X)$  be a KRS on complex manifold $M$.
If $(M, g)$ can be  \K\ immersed   into a definite or indefinite complex space form $(S, g_c)$
then $g$ is KE. Moreover, its  Einstein constant is a rational multiple of $c$.
\end{theor}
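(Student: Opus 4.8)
The plan is to reduce the soliton equation to a Monge--Amp\`ere type identity for the diastasis of $g$, to make that identity completely explicit using Calabi's hereditary property and the known diastasis of a complex space form, and then to extract the conclusion by a transcendence argument. \emph{Step 1 (potential equation).} Passing to the associated real $(1,1)$-forms, \eqref{eqkrsg} reads $\Ric_g=\lambda\,\om_g+L_X\om_g$, where $\Ric_g=-i\de\deb\log\det(g_{i\bar j})$ is the Ricci form. Since $X$ preserves the complex structure, $L_X\om_g=L_X(i\de\deb D)=i\de\deb(XD)$ for any local \K\ potential $D$ of $\om_g$, so \eqref{eqkrsg} says precisely that $\log\det(g_{i\bar j})+\lambda D+XD$ is pluriharmonic. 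Taking $D=D_g$, the diastasis of $g$, this becomes, locally,
\[
\det(g_{i\bar j})=e^{-\lambda D_g-XD_g}\,|e^{F}|^{2},\qquad F\ \text{holomorphic}.
\]
In particular $g$ is \K--Einstein, with Einstein constant $\lambda$, exactly when $XD_g$ is pluriharmonic, i.e. when $L_X\om_g=0$; so it is enough to prove $L_X\om_g=0$ together with $\lambda/c\in\mathbb{Q}$ (here $c\neq0$; when $c=0$ we will get $\lambda=0$).

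\emph{Step 2 (explicit form).} Let $\varphi\colon M\to(S,g_c)$ be a \K\ immersion. By the hereditary property of the diastasis, $D_g=\varphi^{*}D_{g_c}$, and in a standard affine chart of the space form of holomorphic sectional curvature $2c$ one has $D_{g_c}=\tfrac1c\log\bigl(1+c\langle z,z\rangle\bigr)$ for $c\neq0$ and $D_{g_c}=\langle z,z\rangle$ for $c=0$, with $\langle z,z\rangle=\sum_j\e_j|z_j|^{2}$, $\e_j=\pm1$ (only finitely many $-1$'s in the indefinite case). Writing $Z_j=z_j\circ\varphi$ (holomorphic on a neighbourhood in $M$) and $V=\sum_j\e_j|Z_j|^{2}$, so that $e^{cD_g}=1+cV$, and $\zeta_j:=X(Z_j)$ (holomorphic), a short computation gives
\[
XD_g=\frac{P}{1+cV},\qquad P=\sum_j\e_j\bigl(\bar Z_j\zeta_j+Z_j\bar\zeta_j\bigr),
\]
and, by the matrix determinant lemma applied to $g_{i\bar j}=\de_i\de_{\bar j}D_g$,
\[
\det(g_{i\bar j})=\frac{Q}{(1+cV)^{\,n+1}},\qquad n=\dim_{\C}M,
\]
where $Q=(1+cV)\det(h)-c\,\bar v^{T}\!\operatorname{adj}(h)\,v$, $h_{i\bar j}=\sum_a\e_a(\de_iZ_a)\overline{(\de_jZ_a)}$, $v_i=\sum_a\e_a\bar Z_a\,\de_iZ_a$; in particular $Q$ is a polynomial with constant coefficients in the finitely many holomorphic functions $Z_a,\de_iZ_a$ and their conjugates. (For $c=0$: simply $\det(g_{i\bar j})=\det(h)$ and $XD_g=\sum_j\e_j(\bar Z_j\zeta_j+Z_j\bar\zeta_j)$.)

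\emph{Step 3 (transcendence).} Combining Steps 1--2 with $\log(1+cV)=cD_g$ yields, for $m:=n+1-\lambda/c$,
\[
Q=(1+cV)^{m}\,e^{-P/(1+cV)}\,|e^{F}|^{2}.
\]
After an isometry of $S$ we may assume $\varphi(p)=0$ for a fixed $p\in M$. Polarizing (working on $M\times\overline M$) and restricting the anti-holomorphic factor to $\bar p$ makes $V$ vanish and turns $P$ into a $\C$-linear combination $L$ of the $Z_j$, whence $e^{F}=(\mathrm{const})\,e^{L}R$ with $R$ a polynomial in the $Z_a,\de_iZ_a$; substituting back,
\[
(1+cV)^{m}\,e^{-P/(1+cV)+L+\bar L}=\frac{Q}{(\mathrm{const})\,|R|^{2}}\ \in\ \mathcal{K},
\]
$\mathcal{K}$ being the finitely generated field (of meromorphic functions on $M\times\overline M$) generated over $\C$ by the $Z_a,\de_iZ_a,\zeta_a$ and their conjugates. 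Now: (i) taking $d\log$ of both sides on a smooth projective model of $\mathcal{K}$, the left side contributes $m\,d\log(1+cV)+d(\text{element of }\mathcal{K})$, with residues $m\cdot(\text{integer residues of }d\log(1+cV))$, while the right side, lying in $\mathcal{K}$, has a logarithmic derivative with integer residues; since $1+cV$ is non-constant, $m\in\mathbb{Q}$, i.e. $\lambda/c\in\mathbb{Q}$. (ii) With $m\in\mathbb{Q}$, after raising to a suitable power $(1+cV)^{m}$ lies in $\mathcal{K}$, so $e^{g}\in\mathcal{K}$ for $g:=-P/(1+cV)+L+\bar L\in\mathcal{K}$; but the exponential of a non-constant element of $\mathcal{K}$ cannot lie in $\mathcal{K}$ (essential singularity along a pole divisor), so $g$ is constant, hence $XD_g=P/(1+cV)=L+\bar L-g$ is pluriharmonic. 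Therefore $L_X\om_g=0$ and $g$ is \K--Einstein with Einstein constant $\lambda\in\mathbb{Q}\,c$. The flat case $c=0$ runs identically and gives $-\lambda V-P=\mathrm{const}$, so $L_X\om_g=i\de\deb(XD_g)=-\lambda\,\om_g$ and $\Ric_g=\lambda\,\om_g+L_X\om_g=0$: $g$ is Ricci-flat, consistent with its Einstein constant being a (zero) rational multiple of $c=0$.

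\emph{Main obstacle.} The whole weight of the proof sits in Step 3. One must set up the polarization so that $\mathcal{K}$ really is a field of meromorphic functions of finite transcendence degree, and justify the two transcendence inputs (integrality of the residues of the logarithmic derivative of a rational function; transcendence of $e^{(\text{non-constant})}$ over $\mathcal{K}$) — restricting to a generic complex curve reduces both to classical one-variable facts. Crucially, this route works verbatim in the indefinite case, where the naive alternative of concluding from a non-negative ``sum of squares'' identity is unavailable because $V$ need not be non-negative; no positivity is used anywhere. The remaining points are routine: that an affine chart and a point $p$ with $\varphi(p)=0$ exist, that $Q$ and $R$ are not identically zero and $1+cV$ is non-constant (otherwise $\om_g$ would vanish), that $\det(g_{i\bar j})=Q/(1+cV)^{n+1}$ holds with $Q$ of the stated shape, and that the conclusions reached on the chart extend to all of $M$ by connectedness and real-analyticity.
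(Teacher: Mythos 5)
Your Steps 1 and 2 match the paper's reduction: the soliton equation becomes $\det\left[\frac{\partial^{2} D_{p}}{\partial z_{\alpha} \partial\bar z_{\beta}}\right]=e^{-\frac{\lambda}{2}D^g_p-f_X+h+\bar h}$, the diastasis is made explicit through Calabi's hereditary property, and your matrix--determinant--lemma formula for $\det(g_{i\bar j})$ is an explicit version of what the paper extracts from Umehara's Lemma \ref{umeharaimp}. The gap is exactly where you yourself place the weight: Step 3. You treat $\mathcal{K}$, the field generated over $\C$ by the finitely many germs $Z_a,\partial_iZ_a,\zeta_a$ and their conjugates, as if it were the rational function field of a projective model on which your identity holds on an open set, so that the two classical facts (integrality of residues of $d\log$ of a rational function; $e^{(\text{non-constant rational})}$ having an essential singularity along a polar divisor) can be applied. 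But these generators are transcendental germs on a small polydisc: the evaluation map into the model has generic rank at most $2n$, while the transcendence degree $l$ of $\mathcal{K}$ can be much larger, and even when it is not, algebraic independence of holomorphic germs does not force their differentials to be generically independent (compare $z\mapsto(z,e^z)$, whose image is Zariski dense in $\C^2$ but one-dimensional, and under which the nonzero rational form $dy-y\,dx$ pulls back to $0$). Hence your identity is only known on a thin, non-open (though Zariski dense) subset of the model; neither the residue computation nor the essential-singularity argument can be read off there, and ``restricting to a generic complex curve'' does not repair this, since a generic curve of the model does not lie in that image.

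This is precisely the difficulty that the paper's Theorem \ref{lem} is built to overcome, following Huang--Yuan \cite{HY}. One fixes a maximal algebraically independent subset $\Phi=(\varphi_1,\dots,\varphi_l)$ of the generators, writes every generator as a Nash algebraic function $\hat\varphi_j(z,\Phi(z))$ via the algebraic implicit function theorem, and then proves that the polarized identity $\Psi(z,X,w)\equiv0$ holds for all $(z,X)$ in an \emph{open} neighbourhood of the graph $X=\Phi(z)$, not merely on the graph itself: the minimal-polynomial argument shows that otherwise $A_0(z,\Phi(z))\equiv0$, contradicting algebraic independence. Only after this thickening does one have an identity $e^{H}=\prod_k H_k^{\mu_k}$ between genuinely Nash algebraic functions on an open set, to which Lemma \ref{lemHY} applies (and to which \cite[Theorem 2.1(iii)]{CDY} applies for the rationality of $\lambda/c$). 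Your outline needs this decoupling step, or an equivalent substitute, before either of your two transcendence inputs becomes legitimate.
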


It is worth pointing out  that in our theorem there  are no  topological assumptions on the manifold $M$ and the \K\ immersion is not required to be injective.
Notice also that our result is new even if we are in the realm of algebraic geometry, namely   when one assumes that  $M$ is compact, the ambient complex space form 
is the complex projective space (equipped with the Fubini--Study metric of constant holomorphic sectional curvature $4$) and  that  the immersion is an embedding. Indeed our result thereby extends the result obtained by A. Gori and L. Bedulli \cite{BG} 
asserting   that  a KRS  on  a compact \K\  submanifold $M\subset \CP^N$  
 which is  a complete intersection  is KE 
(and hence by a deep result of Hano \cite{HANO} $M$ turns out to be the quadric or a  complex projective space totally geodesically embedded in $\C P^N$). 
The reader is also referred to \cite{BGremar} where the condition  on complete intersection is replaced by the more general assumption that the  \K\ embedding has rational Gauss map.

By combining  well-known results on KE immersions into the complex projective space  in codimension one and two due to S. S. Chern \cite{ch} and  K. Tsukada \cite{ts} respectively,   we obtain the following corollary of Theorem \ref{mainteor}.

\begin{cor}
Let $(g, X)$ be a KRS on a $n$-dimensional complex manifold $M$.
If $(M, g)$ can be \K\ immersed into $\C P^{n+k}$ with $k\leq 2$,
then $M$ is either an open subset  of the complex quadric  or an open subset of $\C P^n$
totally geodesically embedded in $\C P^{n+k}$.
\end{cor}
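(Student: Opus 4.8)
The plan is to reduce the corollary to Theorem~\ref{mainteor} and then invoke the classical classifications of K\"ahler--Einstein submanifolds of $\CP^N$ in low codimension. By Theorem~\ref{mainteor}, the hypothesis that $(M,g)$ admits a \K\ immersion into the complex space form $\CP^{n+k}$ (whose constant holomorphic sectional curvature is $4$, so $c=2$) forces $g$ to be \K--Einstein, with Einstein constant $\lmb$ a rational multiple of $c$. In particular $(M,g)$ is a \K--Einstein manifold \K--immersed into $\CP^{n+k}$ with $k\le 2$, so the problem is now purely about KE immersions and the solitonic vector field has disappeared from the picture.

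Next I would recall the two relevant codimension results. In codimension one, Chern's theorem \cite{ch} states that a complete KE complex hypersurface of $\CP^{n+1}$ is either totally geodesic (hence an open subset of $\CP^n$) or the complex quadric $Q_n\subset\CP^{n+1}$. In codimension two, Tsukada's theorem \cite{ts} gives the analogous list: a complete KE complex submanifold $M^n\subset\CP^{n+2}$ is totally geodesic, or the quadric sitting in a linear $\CP^{n+1}\subset\CP^{n+2}$, or one of a short list of Hermitian symmetric spaces in their first canonical embeddings; however, among these the only ones that actually embed with codimension exactly two are again $\CP^n$ and $Q_n$, so the conclusion of the codimension-two case collapses to the same two possibilities as the codimension-one case. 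Combining, a complete KE submanifold of $\CP^{n+k}$ with $k\le 2$ is either $\CP^n$ totally geodesically embedded or the quadric $Q_n$ (the latter possibly lying in a totally geodesic $\CP^{n+1}$).

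The remaining point is that $M$ is only \K--immersed, not assumed complete or embedded, so Chern's and Tsukada's theorems do not apply verbatim. I would handle this by a local-to-global / analytic continuation argument: a \K--Einstein metric \K--immersed into $\CP^N$ is, by the rigidity of \K\ immersions into complex space forms (Calabi's rigidity theorem, which underlies the diastasis machinery already used in the proof of Theorem~\ref{mainteor}), locally the restriction of a full holomorphic isometry, and the local pieces of the immersion near a point of $M$ agree with a piece of the standard embedding of an open subset of $\CP^n$ or of $Q_n$ because the second fundamental form is determined pointwise by the curvature constraints coming from the Einstein condition plus the Gauss equation in codimension $\le 2$. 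Concretely, one pulls back the situation to a connected open set, identifies the second fundamental form with that of the model, and concludes that $M$ is an open subset of $\CP^n$ or of $Q_n$ mapped by the restriction of the corresponding standard totally geodesic (resp. quadric) embedding. The main obstacle is precisely this last step: making the passage from the ``complete, embedded'' hypotheses of Chern/Tsukada to the ``open subset, immersed'' conclusion stated in the corollary, which requires invoking the appropriate rigidity/uniqueness of the local \K\ immersion rather than quoting the classification theorems as black boxes; once that is set up, the identification of the only codimension-$\le 2$ cases as $\CP^n$ and $Q_n$ is immediate.
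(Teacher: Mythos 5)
Your proposal is correct and follows essentially the same route as the paper: apply Theorem~\ref{mainteor} to conclude that $g$ is K\"ahler--Einstein, then invoke Chern's codimension-one and Tsukada's codimension-two classifications of KE submanifolds of $\C P^N$. The extra local-to-global discussion in your last paragraph is not actually needed, since Chern's theorem (unlike Smyth's earlier version) and Tsukada's theorem are local statements that do not assume completeness or embeddedness, so they apply verbatim to an open, merely immersed $M$.
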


Since a  rotation invariant  KE metric on a complex manifold of dimension $\geq 3$, which admits a \K\ immersion into a complex projective space in such a way that 
 the codimension is   $\leq 3$,
is forced to be the Fubini-Study metric (see \cite[Theorem 1.2]{SALIS} for a proof) we also get:

\begin{cor}
Let $(g, X)$ be a KRS on a $n$-dimensional complex manifold $M$ with $n\geq 3$.
Assume that $g$ is rotation invariant and  that $(M, g)$ can be \K\ immersed into $\C P^{n+k}$ with $k\leq 3$.
Then $(M, g)$ is an open subset of $\C P^n$
totally geodesically embedded in $\C P^{n+k}$.
\end{cor}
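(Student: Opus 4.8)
The plan is to obtain the statement as a direct consequence of Theorem~\ref{mainteor} combined with the rigidity result \cite[Theorem 1.2]{SALIS}; the soliton structure is used only once, at the very beginning, and is then discarded. First I would apply Theorem~\ref{mainteor} to the given data: the target $\C P^{n+k}$, endowed with its Fubini--Study metric, is a (definite) complex space form, so the hypotheses of the theorem are satisfied and we conclude that $g$ is KE. From this point on the holomorphic vector field $X$ plays no role, and the problem is reduced to a purely KE immersion problem for the metric $g$.

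Next I would feed the resulting KE metric into \cite[Theorem 1.2]{SALIS}. The two extra hypotheses of the corollary, namely $n\geq 3$ and the codimension bound $k\leq 3$, are exactly those required by that result, while the rotation invariance of $g$ is preserved since it is an intrinsic property of $(M,g)$ that is unaffected by the reduction carried out in the first step. Therefore \cite[Theorem 1.2]{SALIS} applies and forces $g$ to be the Fubini--Study metric, i.e.\ $g$ has constant holomorphic sectional curvature equal to that of the ambient $\C P^{n+k}$.

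Finally, to pass from the intrinsic statement that $g$ is Fubini--Study to the extrinsic conclusion about the image of the immersion, I would invoke Calabi's rigidity theorem for \K\ immersions into complex projective space: a \K\ immersion of a domain carrying the Fubini--Study metric of $\C P^n$ into $\C P^{n+k}$ of the same holomorphic sectional curvature is necessarily totally geodesic and unique up to an ambient isometry. Hence $(M,g)$ is realized as an open subset of a totally geodesically embedded $\C P^n\subset\C P^{n+k}$, which is the desired conclusion. I do not expect a genuine obstacle in this corollary: all the analytic difficulty is already absorbed into Theorem~\ref{mainteor} and into \cite[Theorem 1.2]{SALIS}, and the only point requiring care is the clean matching of hypotheses across the two reductions, which is immediate.
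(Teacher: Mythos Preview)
Your proposal is correct and follows exactly the route the paper indicates: apply Theorem~\ref{mainteor} to reduce to the KE case, then invoke \cite[Theorem 1.2]{SALIS} under the hypotheses $n\geq 3$, $k\leq 3$, and rotation invariance. Your explicit final appeal to Calabi rigidity to pass from ``$g$ is Fubini--Study'' to the extrinsic totally geodesic conclusion is a detail the paper leaves implicit, but it is the right way to close the argument.
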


Theorem \ref{mainteor}  also yields the following result which can be deduced by D. Hulin's   theorem \cite[Prop. 4.5]{HUfr} 
on the extension of germs of   KE projectively induced metrics.

\begin{cor}
Let $(g, X)$ be a projectively induced  shrinking KRS (i.e. $\lambda>0$ in \eqref{eqkrsg}) on a complex manifold $M$. 
Then $g$ extends to a projectively induced  KE metric $\hat g$, with positive and rational Einstein constant $\lambda$,  on a  compact 
 complex  manifold $\hat M$. 
\end{cor}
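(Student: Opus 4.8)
The statement is a short consequence of Theorem~\ref{mainteor} and D.~Hulin's extension result, so the plan is essentially to assemble the two. First I would unwind the hypothesis: saying that the shrinking KRS $(g,X)$ is \emph{projectively induced} means exactly that $(M,g)$ admits a holomorphic \K\ immersion $f\colon(M,g)\to(\C P^N,g_{\FS})$ into some complex projective space with its Fubini--Study metric, and $(\C P^N,g_{\FS})$ is a (positive definite) complex space form of positive constant holomorphic sectional curvature $2c$. Hence Theorem~\ref{mainteor} applies and gives at once that $g$ is \K--Einstein, say $\Ric_g=\mu g$, with $\mu$ a rational multiple of $c$. This constant is moreover positive — by Hulin's theorem that a \K--Einstein submanifold of a complex projective space has positive scalar curvature — and one checks that it coincides with the soliton constant $\lambda$: since $g$ is \K--Einstein, \eqref{eqkrsg} reduces to $L_Xg=(\mu-\lambda)g$, so (the real part of) the holomorphic field $X$ is homothetic for $g$, and on the complete manifold produced in the next step this forces $\mu=\lambda$, because a complete \K\ metric admitting a proper holomorphic homothety is flat while a \K--Einstein metric with nonzero constant is not. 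Thus $g$ is a projectively induced, real-analytic (being the pullback of the real-analytic metric $g_{\FS}$) \K--Einstein metric with positive Einstein constant $\lambda$.

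Second, I would apply \cite[Prop.~4.5]{HUfr}. That proposition says that a germ of a projectively induced \K--Einstein metric extends, by real-analytic continuation, to a \emph{complete} projectively induced \K--Einstein metric; since the Einstein constant $\lambda$ is positive, Myers' theorem makes the underlying complex manifold compact. Applied to a germ of $g$, and using uniqueness of analytic continuation to identify $g$ with the restriction of the extension, this produces a compact complex manifold $\hat M$ of which $M$ is an open submanifold, together with a projectively induced \K--Einstein metric $\hat g$ on $\hat M$ with $\hat g|_M=g$ and Einstein constant $\lambda$, which is positive and a rational multiple of $c$. This is precisely the assertion.

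Essentially all of the content sits in Theorem~\ref{mainteor}, and the remaining steps are bookkeeping. The one place where a little care is needed — and which I would single out as the main (though mild) obstacle — is the interface between the two cited theorems: one has to verify that Hulin's Proposition~4.5 applies to $g$, i.e.\ real-analyticity (automatic) and positivity of the Einstein constant (from Theorem~\ref{mainteor} together with the shrinking hypothesis), and then to extract the conclusion in the stated form — $M$ open inside a \emph{compact} $\hat M$ carrying a projectively induced KE extension $\hat g$ of $g$ — including the identification of the Einstein constant of $\hat g$ with the soliton constant $\lambda$.
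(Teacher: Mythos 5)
Your overall route --- Theorem \ref{mainteor} to conclude that $g$ is KE, then Hulin's extension theorem \cite[Prop. 4.5]{HUfr} to produce the compact $\hat M$ --- is exactly the one the paper intends. However, there is a genuine circularity in the way you pin down the Einstein constant. You first assert positivity of the Einstein constant $\mu$ by citing Hulin's positivity theorem, but that result (\cite{HU}) concerns \emph{compact} KE submanifolds of $\C P^N$, and at that stage of your argument $M$ is not compact: compactness is only obtained afterwards from the extension result, which in turn needs positivity of the Einstein constant as an input. You then identify $\mu$ with $\lambda$ by arguing that $L_Xg=(\mu-\lambda)g$ would make $X$ a proper homothetic field on the complete extension $\hat M$; but $X$ is only given on $M$, there is no reason it should extend to $\hat M$, and again $\hat M$ is not yet available at that point. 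So, as written, neither the positivity nor the identification $\mu=\lambda$ is in place when you invoke the theorem that requires them. (A purely local Lie-derivative computation only gives $\mu(\mu-\lambda)=0$, so it does not rule out $\mu=0$ by itself.)

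The gap is easy to close, but it requires using the \emph{proof} of Theorem \ref{mainteor} rather than only its statement. In the case $c\neq 0$ that proof shows that $g_2=f_X+h+\bar h$ is constant, hence $f_X$ is the real part of a holomorphic function and $L_X\omega=\frac{i}{2}\partial\bar\partial f_X=0$; the soliton equation \eqref{eqkrsg} then reads $\Ric_g=\lambda g$ on the nose, i.e.\ the Einstein constant \emph{is} the soliton constant $\lambda$, which is positive by the shrinking hypothesis and a rational multiple of $c$ by Theorem \ref{mainteor}. With that in hand, Hulin's \cite[Prop. 4.5]{HUfr} applies to the germ of the (real-analytic, projectively induced) metric $g$ and yields the compact $\hat M$ and the projectively induced KE extension $\hat g$ with Einstein constant $\lambda$, exactly as in your second step, which is otherwise fine.
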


Notice that  D. Hulin \cite{HU} shows that all the compact  KE submanifolds of the complex projective space has necessary positive (rational) Einstein constant
and it is conjecturally true (see e.g. \cite{LZ})  that all such manifolds are flag manifolds.

By combining Theorem \ref{mainteor} with M. Umehara \cite{UmearaE} results on  KE manifolds immersed into the 
flat or complex hyperbolic space  we get:
\begin{cor}
Let $(g, X)$ be a KRS on a complex manifold $M$.
A \K\ immersion of  $(M, g)$ into a definite  complex space form
of nonpositive holomorphic sectional curvature is totally geodesic.
\end{cor}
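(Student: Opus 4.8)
The plan is to deduce the corollary from Theorem~\ref{mainteor} together with M.~Umehara's rigidity theorem \cite{UmearaE} for \K--Einstein submanifolds of flat or complex hyperbolic space. First I would apply Theorem~\ref{mainteor}: since $(M,g)$ admits a \K\ immersion into a definite complex space form $(S,g_c)$ with $2c\le 0$, the metric $g$ is \K--Einstein, say $\Ric_g=\mu g$, and its Einstein constant satisfies $\mu=rc$ for some $r\in\mathds{Q}$. Next I would observe that, up to homothety, a definite complex space form of nonpositive holomorphic sectional curvature $2c$ is the flat complex Euclidean space $\C^N$ when $c=0$ and the complex hyperbolic space $\CH^N$ when $c<0$; since being totally geodesic is a local and homothety-invariant property of an isometric immersion, there is no loss in assuming $S=\C^N$ or $S=\CH^N$.

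At this point the statement follows from Umehara's theorem, which asserts that a \K--Einstein complex submanifold of $\C^N$ or of $\CH^N$ is totally geodesic. Indeed, if $c=0$ then $\mu=r\cdot 0=0$, so $g$ is Ricci-flat, while if $c<0$ then $g$ is \K--Einstein with constant $rc$; in either case $(M,g)$ is a \K--Einstein submanifold of one of the two model spaces, and Umehara's rigidity gives that the immersion is totally geodesic.

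The point that requires care, rather than a genuine obstacle, is that $M$ is assumed neither complete nor compact and the immersion is not assumed injective, whereas rigidity theorems of this flavour are frequently stated for complete (embedded) submanifolds. This is not a real restriction: Umehara's argument is local, being phrased through the local \K\ potential — equivalently, Calabi's diastasis function — of the induced metric, and hence applies verbatim to a possibly incomplete, possibly non-injective immersion. In the flat case one may even avoid the citation altogether, since the Gauss equation shows that the Ricci form of a \K\ submanifold of $\C^N$ is nonpositive and vanishes exactly where the second fundamental form does, so $\mu=0$ immediately forces the immersion to be totally geodesic. Finally, it is worth recording that definiteness of $(S,g_c)$ is used in an essential way — it fixes the sign of the curvature term in the Gauss equation — and that for an indefinite ambient this conclusion can fail: there one only retains the \K--Einstein conclusion of Theorem~\ref{mainteor}.
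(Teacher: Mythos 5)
Your proposal is correct and follows exactly the paper's route: the corollary is obtained by combining Theorem \ref{mainteor} (which gives that $g$ is K\"ahler--Einstein) with Umehara's theorem on K\"ahler--Einstein submanifolds of the flat or complex hyperbolic space. Your additional remarks on the local nature of Umehara's argument and the Gauss-equation shortcut in the flat case are sound but not needed beyond what the paper itself does.
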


As a special case  of the last part of Theorem \ref{mainteor} in the  indefinite case we get:  
\begin{cor}\label{corfin}
The Einstein constant of a KE submanifold of an indefinite complex space form $(S, g_c)$ is a  rational multiple of $c$.
\end{cor}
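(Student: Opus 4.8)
The plan is to derive this as an immediate consequence of Theorem \ref{mainteor}. Let $M$ be a \K\ submanifold of an indefinite complex space form $(S, g_c)$ whose induced metric $g$ is \K--Einstein, say $\Ric_g = \lambda g$ for some $\lambda \in \R$. The point is that an Einstein metric is in particular a Kähler--Ricci soliton: setting $X = 0$ in \eqref{eqkrsg}, the pair $(g, 0)$ satisfies $\Ric_g = \lambda g + L_0 g$ trivially, so $(g, 0)$ is a KRS on $M$ with solitonic constant $\lambda$. Moreover the inclusion $M \hookrightarrow S$ is by hypothesis a \K\ immersion into the indefinite complex space form $(S, g_c)$, which is exactly the setting of Theorem \ref{mainteor}.

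Applying Theorem \ref{mainteor} to the KRS $(g, 0)$ on $M$ immersed in $(S, g_c)$, we conclude that $g$ is KE — which we already knew — and, crucially, that its Einstein constant is a rational multiple of $c$. Since the Einstein constant of $(g, 0)$ as a soliton is precisely $\lambda$ (because $L_0 g = 0$), this says $\lambda \in \q \cdot c$, which is the assertion of the corollary.

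I do not anticipate any obstacle here: the only thing to verify is the compatibility of conventions, namely that the ``Einstein constant'' appearing in the statement of Theorem \ref{mainteor} (the number $\lambda$ in $\Ric_g = \lambda g + L_X g$ once one has shown $X$ may be taken to be zero, or more precisely the Einstein constant of the resulting KE metric) coincides with the Einstein constant of $g$ in the usual sense $\Ric_g = \lambda g$. This is automatic from the definition \eqref{eqkrsg} with $X=0$. Thus Corollary \ref{corfin} follows with no further work; it is genuinely just the specialization of the last sentence of Theorem \ref{mainteor} to the trivial soliton $X = 0$, recorded separately because it is a statement of independent interest purely about \K\ submanifolds of indefinite complex space forms, with no solitonic hypothesis.
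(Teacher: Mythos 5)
Your proof is correct and is exactly the paper's argument: the paper presents Corollary \ref{corfin} as an immediate special case of the last part of Theorem \ref{mainteor}, obtained by viewing the KE metric as the trivial soliton $(g,0)$. The check that the Einstein constant of the trivial KRS coincides with the usual one is automatic, as you note, so nothing further is needed.
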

When the ambient 
space is the  indefinite complex projective space this  corollary 
can be considered an  extension 
of D. Hulin result \cite[Prop. 5.1]{HUfr} on  the rationality of the  Einstein constant of a projectively induced KE metric.
Moreover, to the best of authors' knowledge,
the classification of the  KE submanifolds of the indefinite complex  hyperbolic space is missing.
Even in  the  codimension one case (where such classification is known (\cite[Theorem 3.2.4]{RO})
Umehara's theorem
does not hold (there exists non totally geodesic KE submanifolds of the indefinite complex hyperbolic space).  Thus, Corollary \ref{corfin} seems to be a novelty also in the case of KE submanifolds of the indefinite complex  hyperbolic space.

\vskip 0.3cm

The proof of Theorem \ref{mainteor} is based on Theorem \ref{lem} (see next section)  which describes  some properties of Umehara algebra and its field of fractions.
Section 3 is dedicated to the proof of Theorem \ref{mainteor}.

\vskip 0.3cm
We would like to thank Yuan Yuan for his interest in our work 
and for stimulating discussions about Umehara's algebra.

\section{Umehara algebra and its field of fractions}
Let $M$ be a complex manifold. Fix a point  $p\in M$ and let 
$\OO_p$ be the algebra of germs of holomorphic functions around $p$.
Denote  by $\R_p$ the germs of real numbers. 
The Umehara algebra (see  \cite{UmearaD}) is defined  to be the $\R$-algebra $\Lambda_p$ generated
by the elements of the form  $h\bar k+\bar h k$, for $h, k\in \OO_p$. 
Umehara algebra has been an important tool in the study of relatives \K\ manifolds (see \cite{diloi, UmearaC, UmearaE, UmearaD,  CDY}).

Let
$$\hat\OO_p=\left\{\alpha =(\alpha_1,   \dots ,\alpha_m) \ | \   \alpha_j\in\OO_p, \alpha_j(p)=0 , \forall j=1, \dots , m, m\geq 1\right\}.$$
For  $\alpha =(\alpha_1,  \dots ,\alpha_m)\in \hat\OO_p$
and $\ell\in\N$ such that $\ell\leq |\alpha|:=m$
we set
$$\langle \alpha, \alpha\rangle_{\ell}(z)=\sum_{j=1}^\ell|\alpha_j(z)|^2- \sum_{k=\ell+1}^{|\alpha|}|\alpha_k(z)|^2.$$
Since $h\bar k+\bar h k=|h+k|^2-|h|^2-|k|^2$ it is not hard to see (see \cite{UmearaD} for details) 
that each $f \in \Lambda_p$  can be written  as
$$f=h+\bar h+\langle \alpha, \alpha\rangle_{\ell}$$
for some $h\in \OO_p$, $\alpha =(\alpha_1,  \dots ,\alpha_m)\in \hat\OO_p$, 
$\ell\leq |\alpha|$
and such that  $\alpha_1, \dots , \alpha_m$  are linearly independent over $\C$.

Consider  the $\R$-algebra $\tilde\Lambda_p\subset \Lambda_p$ given by
\begin{equation}\label{Lambda0}
\tilde\Lambda_p=\left\{a+ \langle \alpha, \alpha\rangle_{\ell} \mid a\in \R_p,\ \alpha\in \hat\OO_p,\  \ell\leq |\alpha|\right\}.
\end{equation}
Notice that the germ of the real part of a nonconstant  holomorphic function 
$h\in\OO_p$ belongs to 
$\Lambda_p$ but not to 
$\tilde\Lambda_p$. 

The key element in the proof of Theorem \ref{mainteor} is the following Theorem \ref{lem} whose proof is inspired  by the work X. Huang and Y. Yuan  \cite{HY} (see also \cite{YUANYUAN}).

\begin{theor}\label{lem}
Let $\tilde K_p$ be the field of fractions of  $\tilde\Lambda_p$.
Let $\mu$ be  a  real number and  $g=\frac{b+ \langle \beta, \beta\rangle_{q}}{c+ \langle \gamma, \gamma\rangle_{r}}\in \tilde K_p$, then
\begin{equation}\label{fundeq}
e^{g}\not\in {\tilde\Lambda_p}^\mu\tilde K_p\setminus \R_p
\end{equation}
where ${\tilde\Lambda_p}^\mu\tilde K_p=\left\{f^\mu h\mid f\in\tilde\Lambda_p, \  h\in \tilde K_p \right\}$.
\end{theor}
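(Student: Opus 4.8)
The plan is to argue by contradiction, following the strategy of Huang--Yuan on exponentially algebraic identities. Suppose that $e^{g} = f^{\mu} h$ with $f \in \tilde\Lambda_p$, $h \in \tilde K_p$, and that $e^g \notin \R_p$, i.e.\ $g$ is not constant. Since $g = (b + \langle\beta,\beta\rangle_q)/(c + \langle\gamma,\gamma\rangle_r)$, the first step is to understand the \emph{complexification}: elements of $\tilde\Lambda_p$ (and hence of $\tilde K_p$) are real-analytic germs that polarize to holomorphic germs on a neighbourhood of $(p,\bar p)$ in $M \times \overline{M}$. Writing $z$ for the holomorphic coordinate and $w$ for the independent antiholomorphic slot, a germ $a + \langle\alpha,\alpha\rangle_\ell$ polarizes to $a + \sum_{j\le \ell}\alpha_j(z)\overline{\alpha_j}(w) - \sum_{k>\ell}\alpha_k(z)\overline{\alpha_k}(w)$, where $\overline{\alpha_j}(w)$ denotes the germ with conjugated Taylor coefficients. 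Thus the identity $e^{g} = f^{\mu} h$ becomes an identity of holomorphic germs in $(z,w)$, and the whole problem lives in the ring of holomorphic germs in $2n$ variables.

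Next I would exploit the rational structure by restricting to generic slices. Fix a generic $w = w_0$ near $\bar p$; then $g(z,w_0)$ is a nonconstant meromorphic germ in $z$ (nonconstant for generic $w_0$ because $g$ is genuinely nonconstant as a two-variable germ), and the identity reads $e^{g(z,w_0)} = f(z,w_0)^\mu h(z,w_0)$. The key point is a \emph{ramification / valuation} argument: pick a point $z_0$ (again generic, or chosen as a pole or zero of the relevant factors) and look at the local behaviour along a small disc. If $g$ has a pole at some $(z_0,w_0)$ then $e^g$ has an essential singularity there, which cannot be matched by $f^\mu h$, since $f^\mu h$ is, after passing to a branched cover to make sense of $f^\mu$, at worst meromorphic-times-a-power — it has moderate (polynomial) growth along radial approaches, whereas $e^g$ with $g\to\infty$ does not. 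Hence $g$ must in fact be \emph{holomorphic} near $p$, i.e.\ the denominator $c + \langle\gamma,\gamma\rangle_r$ divides the numerator in $\tilde K_p$ and $g \in \tilde\Lambda_p$ up to adjusting; more precisely $g$ is a holomorphic germ in $(z,w)$. Then differentiate the log of the identity: $dg = \mu \, d(\log f) + d(\log h)$, so $dg$ is a closed logarithmic-type form with at worst simple poles along the divisors of $f$ and $h$ and residues in $\mu\z + \z$. Comparing this with the explicit shape of $dg$ coming from $g = b + \langle\beta,\beta\rangle_q$ (a germ all of whose partials are again in $\tilde\Lambda_p$, in particular \emph{holomorphic}, hence pole-free) forces the logarithmic poles to cancel, which will pin down $f$ and $h$ severely.

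The heart of the matter, and the step I expect to be the main obstacle, is the final combinatorial/algebraic contradiction once $g$ is holomorphic: one must show that a nonconstant holomorphic germ $g$ of the special form $b + \langle\beta,\beta\rangle_q$ cannot satisfy $e^g = f^\mu h$ with $f \in \tilde\Lambda_p$ and $h \in \tilde K_p$. Here I would use the structure of $\tilde\Lambda_p$ crucially: an element $a + \langle\alpha,\alpha\rangle_\ell$ polarizes to something of the form $a + P(z)\cdot Q(w)$ schematically — a sum of \emph{split} products $\alpha_j(z)\overline{\alpha_j}(w)$ — and this ``bounded rank'' property is preserved under products and quotients only in a controlled way, whereas $e^{\langle\beta,\beta\rangle_q}$ expanded as a power series in the $\beta_j(z)\overline{\beta_j}(w)$ produces terms of unbounded rank (all monomials $\prod \beta_j(z)^{a_j}\prod\overline{\beta_j}(w)^{a_j}$), which cannot be absorbed into $f^\mu h$. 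Making ``rank'' precise (e.g.\ via the rank of the matrix of Taylor coefficients of the polarized germ, or via Umehara's own rank filtration on $\Lambda_p$) and checking it is multiplicative enough to run the argument is the technical core; I would model it on Huang--Yuan's treatment of the analogous statement and on Umehara's original rank estimates for $\Lambda_p$. Once the rank obstruction is in place, the nonconstancy of $g$ gives the contradiction and \eqref{fundeq} follows.
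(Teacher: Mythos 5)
Your opening move --- polarizing the identity $e^{g}=f^{\mu}h$ into an identity of holomorphic germs in the doubled variables $(z,w)$ --- is indeed the first ingredient of the paper's proof, but everything after that diverges, and the step you yourself flag as ``the technical core'' is both missing and, as described, unworkable. Your proposed obstruction is that $f^{\mu}h$ has ``bounded rank'' while $e^{\langle\beta,\beta\rangle_{q}}$ has unbounded rank. This dichotomy fails as soon as $\mu\notin\z$: already $f=1+|z|^{2}$ polarizes to $(1+z\bar w)^{\mu}=\sum_{k\ge 0}\binom{\mu}{k}z^{k}\bar w^{k}$, whose rank (in any of the senses you suggest: rank of the matrix of Taylor coefficients, Umehara's filtration on $\Lambda_p$) is infinite. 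So both sides of the putative identity have unbounded rank, the comparison yields nothing, and this is exactly the difficulty that forces one to leave the Calabi--Umehara rank framework. The preliminary reductions are also largely moot: the germs live at $(p,\bar p)$, where the denominators $c+\langle\gamma,\gamma\rangle_{r}$, $e+\langle\epsilon,\epsilon\rangle_{v}$ are nonzero constants, so $g$ and $h$ are automatically holomorphic near the base point and the pole/essential-singularity discussion buys nothing; and the residue analysis of $dg=\mu\,d(\log f)+d(\log h)$ is never driven to a contradiction.

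What the paper actually does is algebraic rather than rank-theoretic. It collects all the holomorphic germs $\varphi_{1},\dots,\varphi_{s}$ occurring in $f,g,h$, lets $\mathfrak{R}$ be the field of rational functions and $l$ the transcendence degree of $\mathfrak{R}(\varphi_{1},\dots,\varphi_{s})$ over $\mathfrak{R}$. If $l=0$ all the data are Nash algebraic and Lemma \ref{lemHY} (an exponential of a Nash algebraic function equal to a product of real powers of Nash algebraic functions is constant) concludes immediately. If $l>0$, one writes $\varphi_{j}(z)=\hat\varphi_{j}(z,\Phi(z))$ with $\hat\varphi_{j}(z,X)$ Nash algebraic and $\Phi$ a transcendence basis, and proves --- via minimal polynomials and the algebraic independence of $\Phi$ --- that the polarized identity continues to hold with $X$ a \emph{free} variable. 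Fixing $w$ and applying Lemma \ref{lemHY} in the variables $(z,X)$, then evaluating at $X=\Phi(z)$, forces $g$ to be constant. This free-variable/transcendence-degree mechanism is the actual engine of the proof and has no counterpart in your proposal; without it, or a genuinely new substitute for the rank comparison, the argument is incomplete.
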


\begin{remark}\rm\label{remarrat}
 Theorem \ref{lem} extends 
\cite[Theorem 2.1, part (ii)]{CDY}  which asserts that $e^g\not\in \tilde K_p\setminus \R_p$,
for  $g\in \tilde\Lambda_p$. Moreover \cite[Theorem 2.1, part (iii)]{CDY} shows that if $f^\alpha$
 belongs to $\tilde K_p\setminus \R_p$
 for some real number $\alpha$ and  $f\in \tilde\Lambda_p$,
then $\alpha$ is rational. This result will be used to prove the rationality of the Einstein constant in Theorem \ref{mainteor}.
\end{remark}

The following lemma is a key ingredient  in the proof of Theorem \ref{lem}.

\begin{lem}\label{lemHY}(\cite[Lemma 2.2]{HYbook})
Let $V \subset \mathbb{C}^{\kappa}$ be a connected open set. Let $H_{1}\left(\xi_{1},\dots,\xi_{\kappa}\right)$, $\dots$, $H_{K}\left(\xi_{1}, \ldots, \xi_{\kappa}\right)$ and $H\left(\xi_{1}, \ldots, \xi_{\kappa}\right)$ be holomorphic Nash algebraic functions on $V$. Assume that
\begin{equation*}
\exp ^{H\left(\xi_{1}, \ldots, \xi_{\kappa}\right)}=\prod_{\alpha=1}^{K}\left(H_{k}\left(\xi_{1}, \ldots, \xi_{\kappa}\right)\right)^{\mu_{\alpha}}, \xi \in V
\end{equation*}
for certain real numbers $\mu_{1}, \ldots, \mu_{K} .$ Then $H\left(\xi_{1}, \ldots, \xi_{\kappa}\right)$ is constant.
\end{lem}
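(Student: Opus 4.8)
The plan is to prove the contrapositive: if $H$ is nonconstant, then the left-hand side $e^H$ has an essential singularity somewhere along its analytic continuation, whereas the right-hand side $\prod_k H_k^{\mu_k}$, being a product of real powers of algebraic functions, can only have an algebraic-type singularity, where the logarithm of the modulus grows at most logarithmically. These growth rates are incompatible, so $H$ must be constant. The guiding principle is that $e^{(\text{nonconstant algebraic})}$ is too transcendental to equal a product of powers of algebraic functions.

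First I would reduce to one variable $\kappa=1$. Assuming $H$ nonconstant, pick $\xi^{*}\in V$ with $dH(\xi^{*})\neq 0$ and restrict all functions to a generic complex line $L$ through $\xi^{*}$, so that $H|_{L}$ is a nonconstant holomorphic Nash algebraic function of one variable, each $H_{k}|_{L}$ is Nash algebraic and not identically zero, and the functional equation persists on $L\cap V$. Since $\xi$, $H|_{L}$, $H_{1}|_{L},\dots,H_{K}|_{L}$ are all algebraic over the rational function field $\C(\xi)$, they are simultaneously meromorphic functions on a single compact Riemann surface $\Sigma$, namely the normalization of the irreducible projective curve attached to the finite field extension they generate, equipped with the branched covering $\xi\colon\Sigma\to\CP^{1}$. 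A connected component $U\subset\Sigma$ of the preimage of $L\cap V$ then carries the identity $e^{H}=\prod_{k}H_{k}^{\mu_{k}}$.

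Next I would locate a pole. Since $H$ is a nonconstant meromorphic function on the compact surface $\Sigma$, it attains the value $\infty$; fix a point $P\in\Sigma$ where $H$ has a pole and a local holomorphic coordinate $t$ centred at $P$. In terms of $t$ every function is meromorphic, so the Laurent expansions read $H=a\,t^{-m}(1+o(1))$ with $a\neq 0$, $m\geq 1$, and $H_{k}=b_{k}\,t^{\nu_{k}}(1+o(1))$ with $b_{k}\neq 0$, $\nu_{k}\in\z$ the order of $H_{k}$ at $P$. Continuing the functional equation from $U$ to a slit punctured neighbourhood of $P$, with continuous branches of $H_{k}^{\mu_{k}}=\exp(\mu_{k}\log H_{k})$ fixed on the slit, the identity $\operatorname{Re}H=\log\bigl|\prod_{k}H_{k}^{\mu_{k}}\bigr|$ holds there. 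Because $\mu_{k}$ is real, the modulus is branch-independent, $\bigl|H_{k}^{\mu_{k}}\bigr|=|H_{k}|^{\mu_{k}}$, so the right-hand side equals $\sum_{k}\mu_{k}\log|H_{k}|=\bigl(\sum_{k}\mu_{k}\nu_{k}\bigr)\log|t|+O(1)$, which is $O(\log|t|)$ as $t\to 0$. The left-hand side is $\operatorname{Re}(a\,t^{-m})+o(|t|^{-m})$, which is unbounded of order $|t|^{-m}$ along rays $t=s\,e^{i\theta}$, $s\to 0^{+}$, with $\cos(\arg a-m\theta)\neq 0$. Equating forces $\operatorname{Re}(a\,t^{-m})=O(\log|t|)$, hence $a=0$, a contradiction. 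Therefore $H$ is constant.

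I expect the main obstacle to be the careful bookkeeping of analytic continuation and branch choices: the powers $H_{k}^{\mu_{k}}$ are genuinely multivalued, with monodromy $e^{2\pi i\mu_{k}n}$ around the divisor of $H_{k}$, and must be continued consistently with $H$ so that the functional equation survives into the punctured neighbourhood of $P$ where the comparison is made. Passing to $\Sigma$ is what tames this, since there $H$ and the $H_{k}$ themselves become single-valued, leaving only the real-power branches to be fixed on a slit; on that slit the Laurent estimates apply and the crucial modulus bound is branch-independent. The remaining ingredients, namely the existence of a pole of a nonconstant meromorphic function on a compact Riemann surface and the incompatibility of the $|t|^{-m}$ growth of $\operatorname{Re}(a\,t^{-m})$ with the $O(\log|t|)$ growth of $\log|t|$, are standard.
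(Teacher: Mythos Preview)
The paper does not supply a proof of this lemma; it is quoted from Huang--Yuan and used as a black box in the proof of Theorem~\ref{lem}. So there is no argument in the paper to compare your proposal against.

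That said, your argument is sound. The reduction to a generic complex line, the passage to the compact Riemann surface $\Sigma$ attached to the finite extension of $\C(\xi)$ generated by $H,H_1,\dots,H_K$, and the growth comparison at a pole of $H$ all go through as you describe. One streamlining worth noting: instead of continuing branches of $H_k^{\mu_k}$ along a path and then taking moduli, you can take the logarithmic differential of the functional equation on $U$ to obtain
\[
dH \;=\; \sum_{k=1}^{K}\mu_k\,\frac{dH_k}{H_k}
\]
as an identity of meromorphic $1$-forms on $\Sigma$ (both sides are globally defined and meromorphic on $\Sigma$, and they agree on the nonempty open set $U$, hence everywhere). At a pole $P$ of $H$ of order $m\ge 1$ the left-hand side has a pole of order $m+1\ge 2$, while each $dH_k/H_k$ has at most a simple pole at $P$; this yields the contradiction immediately and bypasses the branch bookkeeping you flagged as the main obstacle. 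Your modulus argument is of course equivalent (it is the harmonic-function shadow of this $1$-form identity), but the differential form version makes the analytic continuation step automatic.
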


\begin{proof}[Proof of Theorem \ref{lem}]
Assume that there exists
$f=a+ \langle \alpha, \alpha\rangle_{\ell}\in \tilde\Lambda_p$ and  $h=\frac{d+ \langle \delta, \delta\rangle_{u}}{e+ \langle \epsilon, \epsilon\rangle_{v}}\in\tilde K_p$ such that
\begin{equation}\label{egz}
e^{g(z)}=e^\frac{b+ \langle \beta(z), \beta(z)\rangle_{q}}{c+ \langle \gamma(z)), \gamma(z)\rangle_{r}}=f^\mu(z)h(z)=\left[a+ \langle \alpha(z), \alpha(z)\rangle_{p}\right]^\mu \frac{d+ \langle \delta(z), \delta(z)\rangle_{u}}{e+ \langle \epsilon(z), \epsilon(z)\rangle_{v}}.
\end{equation}
By renaming the functions involved in \eqref{egz} we can write 
$$
S=\left\{\varphi_1, \dots , \varphi_s\right\}=\left\{\alpha_1,\dots,\alpha_{|\alpha|} ,\beta_1,\dots,\beta_{|\beta|} , \dots, \epsilon_1,\dots,\epsilon_{|\epsilon|} \right\}.
$$ 
Let $D$ be an open neighborhood  of the origin of $\C^n$ on which each $\varphi_j$ is defined.
Consider   the field  $\mathfrak R$ of rational function on $D$ and its  field extension 
$
\mathfrak F = \mathfrak R \left(S\right),
$
namely, the smallest subfield of the field of the  meromorphic functions on $D$, containing  rational functions and the elements of $S$. Let $l$ be the transcendence degree  of the field extension $\FF/\FR$. 
If $l=0$, then each element in $S$ is holomorphic  Nash algebraic and hence $g$ is forced to be constant  by Lemma \ref{lemHY}. Assume then that $l>0$.
Without loss of generality we can assume that  $\CG=\left\{\varphi_1,\dots,\varphi_l\right\}\subset S$  is a maximal algebraic independent subset over $\FR$. Then   there exist minimal polynomials $P_j\left(z, X,Y\right)$, $X=\left(X_1,\dots,X_l\right)$,  such that 
$$
P_j\left(z,\Phi(z), \va_j(z)\right)\equiv 0, \ \forall j=1, \dots ,s,
$$
where $\Phi(z)=\left(\va_1(z),\dots,\va_l(z)\right)$.

Moreover, by  the definition of minimal polynomial 
 $$
 \frac{\de P_j\left(z,X,Y\right)}{\de Y}\left(z,\Phi (z),\va_j(z)\right)\not\equiv 0, \ \forall j=1, \dots ,s.
 $$
  on $D$.
 Thus, by the algebraic version of the existence and uniqueness part of the implicit function theorem, there exist a connected open subset $U\subset D$ with $p\in \ov U$ and Nash algebraic functions 
 $\hat \va_j(z,X)$,  defined in a neighborhood $\hat U$ of $\left\{(z, \Phi(z)) \mid z \in U \right\}\subset \C^n \times \C^{l}$, such that
$$
\va_j(z)=\hat \va_j\left(z,\Phi(z)\right), \ \forall j=1, \dots ,s.
$$
for any $z\in U$. Denoting $\hat\varphi(z, X)=(\hat\varphi_1 (z, X), \dots , \hat\varphi_s(z, X))$ we can write 
\begin{equation}\label{varie}
\va (z)=\hat \va \left(z,\Phi (z)\right)= 
\left(\hat \alpha\left(z,\Phi (z)\right), \dots ,\hat\epsilon\left(z,\Phi (z)\right)\right),
\end{equation}
where
$\varphi=(\varphi_1, \dots , \varphi_s)$
and $\hat\alpha (z, X), \dots ,\hat\epsilon (z, X)$  are vector-valued  holomorphic Nash algebraic  functions on  $\hat U$ such that 
$
\alpha(z)=\hat \alpha \left(z,\Phi(z)\right),\dots  ,\epsilon(z)=\hat \epsilon \left(z,\Phi(z)\right)$.

Consider the function
\begin{equation*}
\Psi(z,X,w):=\frac{b+ \langle \hat\beta(z, X), \beta(w)\rangle_{q}}{c+ \langle \hat\gamma(z, X), \gamma(w)\rangle_{r}}
-\mu\log \left[a+ \langle \hat\alpha(z, X), \alpha(w)\rangle_{\ell}\right]
-\log\left[\frac{d+ \langle \hat\delta (z, X), \delta (w)\rangle_{u}}{e+ \langle \hat\epsilon (z, X), \epsilon (w)\rangle_{v}}\right],
\end{equation*}
where, for  $\alpha (z)=(\alpha_1(z),  \dots ,\alpha_m(z))$ and corresponding 
$\hat\alpha (z,X)=(\hat\alpha_1(z, X),  \dots ,\hat\alpha_m(z, X))$
we mean
$$\langle \hat\alpha(z, X), \alpha(w)\rangle_{\ell}=\sum_{j=1}^\ell\hat\alpha_j(z,X)\alpha_j(\overline w)- \sum_{k=\ell+1}^{|\alpha|}\hat\alpha_k(z, X)\alpha_k(\overline w)$$
(and similarly with the other terms).
By shrinking $U$ if necessary we can assume $\Psi(z,X,w)$  is defined on  $\hat U \times U$.
We claim that $\Psi(z,X,w)$ vanishes identically on  this set.
Since $\varphi_j(p)=0$ for all $j=1, \dots ,s$ and $p\in \ov U$, it follows by \eqref{egz}
that  $\Psi(z,X,0)\equiv 0$. Hence, in order to prove the claim, it is enough to show that 
$(\de_w \Psi)(z,X,w)\equiv 0$ for all $w\in U$.  Assume, by contradiction, that there exists $w_0\in U$ such that $(\de_w \Psi)(z,X,w_0)\not\equiv 0$. Since $(\de_w \Psi)(z,X,w_0)$ is Nash algebraic in $(z,X)$ there exists a holomorphic polynomial $P(z,X,t)=A_d(z,X)t^d+\dots+A_0(z,X)$ with $A_0(z,X)\not\equiv 0$ such that  $P(z,X,(\de_w \Psi)(z,X,w_0))=0$. 
Since,  by \eqref{egz}  and  \eqref{varie} we have
 $\Psi(z,\Phi (z),w)\equiv 0$ we get $(\de_w \Psi)(z,\Phi (z),w)\equiv 0$. Thus  $A_0(z,\Phi (z))\equiv 0$
 which contradicts  the fact that $\va_1(z),\dots,\va_l(z)$ are algebraic independent over $\FR$. Hence $(\de_w \Psi)(z,X,w_0)\equiv 0$ and the claim is proved.

Therefore
$$e^{\frac{b+ \langle \hat\beta(z, X), \beta(w)\rangle_{q}}{c+ \langle \hat\gamma(z, X), \gamma(w)\rangle_{r}}}=\left[a+ \langle \hat\alpha(z, X), \alpha(w)\rangle_{\ell}\right]^\mu
\left[\frac{d+ \langle \hat\delta (z, X), \delta (w)\rangle_{u}}{e+ \langle \hat\epsilon (z, X), \epsilon (w)\rangle_{v}}\right],$$
for every $(z,X,w)\in \hat U \times U$. By fixing  $w\in U$ and  applying Lemma \ref{lemHY} we deduce that 
$\frac{b+ \langle \hat\beta(z, X), \beta(w)\rangle_{q}}{c+ \langle \hat\gamma(z, X), \gamma(w)\rangle_{r}}$
is constant in $(z,X)$.
Thus, by evaluating at $X=\Phi(z)$ one obtains  that 
$\frac{b+ \langle \beta(z), \beta(w)\rangle_{q}}{c+ \langle \gamma(z), \gamma(w)\rangle_{r}}$
is constant for fixed $w$ forcing
$g(z)=\frac{b+ \langle \beta(z), \beta(z)\rangle_{q}}{c+ \langle \gamma(z), \gamma(z)\rangle_{r}}$ to be constant  for all $z$. The proof of the theorem is complete.
\end{proof}

\section{Proof of Theorem \ref{mainteor}}
In the proof of Theorem \ref{mainteor} we  also need the concept of  diastasis function and Bochner's coordinates briefly recalled below. The reader is referred either to the celebrated work of Calabi \cite{Cal} or to \cite{LoiZedda-book}
for details.

Given a complex manifold $M$ endowed with a real analytic 
\K \ metric $g$ (notice that a \K\ metric induced by a complex space form is real analytic),
Calabi introduced,
in a neighborhood of a point 
$p\in M$,
a very special
\K\ potential
$D^g_p$ for the metric
$g$, which 
he christened 
{\em diastasis}.
Among all the potentials the diastasis
is characterized by the fact that 
in every coordinate system 
$\{z_1, \dots , z_n\}$ centered in $p$
$$D^g_p(z, \bar z)=\sum _{|j|, |k|\geq 0}
a_{jk}z^j\bar z^k,$$
with 
$a_{j 0}=a_{0 j}=0$
for all multi-indices
$j$.
One of the main feature of Calabi's diastasis function is its hereditary property: {\em if $\varphi:M\rightarrow S$
is a \K\ immersion from a \K\ manifolds $(M, g)$ into a definite or indefinite complex space form $(S, g_c)$ of constant holomorphic sectional curvature $2c$,
then 
$D^g_p=\varphi^*(D^{g_c}_{\varphi (p)})$
for all  $p\in M$}.

More generally, in the rest of the paper   we say that  a real analytic function defined on a neighborhood   $U$ of   a point $p$
of a complex manifold $M$ is of  {\em diastasis-type} if 
in one (and hence any) coordinate system 
$\{z_1, \dots , z_n\}$ centered at $p$ its  expansion in $z$ and $\bar z$ 
does not contains non constant purely holomorphic or anti-holomorphic terms (i.e. of the form $z^{j}$ or $\bar{z}^{j}$ with  $j > 0$). 
The following simple  remarks will be used in the proof of Theorem \ref{mainteor}.

\begin{remark}\label{keyremark}\rm
The  following facts holds true.
\begin{itemize}
\item [(a)] A real-analytic function $g$ is of  diastasis-type if and only if  $e^g$ is of  diastasis-type. 
\item [(b)] A function $f\in\Lambda_p$ (resp. $K_p$) belong to $\tilde\Lambda_p$
(resp. $\tilde K_p$) if and only if  $f$ is of diastasis-type, where   $\tilde K_p$ is the field of fractions of the algebra $\tilde\Lambda_p$ defined by \eqref{Lambda0}.
\end{itemize}
\end{remark}

In a neighborhood of 
$p\in M$ one can  find local 
(complex) coordinates
such that
$$D^g_p(z, \bar z)=|z|^2+
\sum _{|j|, |k|\geq 2}
b_{jk}z^j\bar z^k,$$
where 
$D^g_p$ is the diastasis
relative to $p$.
These coordinates,
uniquely defined up
to a unitary transformation,
are called 
{\em the Bochner   or normal coordinates}
with respect to the point $p$ (cfr. \cite{note, boc, Cal}).

The following proposition, interesting on its own sake, will be used in  the proof of Theorem \ref{mainteor}.\begin{prop}\label{mainprop}
Let $(M, g)$ be a \K\ manifold which  can be  \K\ immersed   into an $N$-dimensional definite or indefinite complex space form $(S, g_c)$
of constant holomorphic sectional curvature $2c$.
Let $p\in M$ and  $\{z_1, \dots , z_n\}$ Bochner's coordinates  in a neighborhood $U$ of a point $p\in M$ where the diastasis $D^g_p$ is defined.
Then 
\begin{equation}\label{detko}
\det\left[\frac{\partial^{2} D^g_{p}}{\partial z_{a} \partial\bar z_{\beta}}\right]\in \tilde K_p,
\end{equation}
\begin{equation}\label{Dpc=0}
D^g_p\in\tilde\Lambda_p,\  \mbox{if}\  c=0
\end{equation}
and
\begin{equation}\label{Dpc=1}
e^{\frac{c}{2}D^g_p}\in\tilde\Lambda_p,\  \mbox{if}\  c\neq 0.
\end{equation}
\end{prop}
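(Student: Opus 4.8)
The plan is to express the ambient diastasis $D^{g_c}_{\varphi(p)}$ of the complex space form in a form that, after pullback by $\varphi$ and the hereditary property $\varphi^*(D^{g_c}_{\varphi(p)}) = D^g_p$, visibly lands in $\tilde\Lambda_p$ or has the required exponential in $\tilde\Lambda_p$, and then to differentiate to get the determinant statement. Recall the explicit form of the diastasis of the standard indefinite complex space form $(S, g_c)$ of constant holomorphic sectional curvature $2c$ in a suitable affine chart around $\varphi(p)$: for $c = 0$ it is $D^{g_0}_0(w,\bar w) = \langle w, w \rangle_{\ell_0}$ for the appropriate signature $\ell_0$, a pseudo-Euclidean norm-square; for $c \neq 0$ it is, up to the normalization, $D^{g_c}_0(w,\bar w) = \tfrac{2}{c}\log\bigl(1 + \tfrac{c}{2}\langle w, w\rangle_{\ell_0}\bigr)$ (the Fubini–Study / hyperbolic diastasis, valid for both signs of $c$ and both definite/indefinite cases).

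First I would fix Bochner coordinates $\{z_1,\dots,z_n\}$ around $p$ and write the \K\ immersion in these coordinates as $\varphi = (\varphi_1,\dots,\varphi_N)$ with $\varphi_j \in \OO_p$, $\varphi_j(p)=0$, so that $\varphi = (\varphi_1,\dots,\varphi_N) \in \hat\OO_p$ (after discarding any components that vanish identically, and noting we may arrange linear independence). Then the hereditary property gives, for $c \neq 0$,
\begin{equation*}
D^g_p(z,\bar z) = \frac{2}{c}\log\!\left(1 + \frac{c}{2}\,\langle \varphi(z), \varphi(z)\rangle_{\ell_0}\right),
\end{equation*}
hence $e^{\frac{c}{2}D^g_p} = 1 + \frac{c}{2}\langle\varphi,\varphi\rangle_{\ell_0}$, which is manifestly an element of $\tilde\Lambda_p$ since $\langle\varphi,\varphi\rangle_{\ell_0}$ is of the form $\langle\alpha,\alpha\rangle_\ell$ with $\alpha\in\hat\OO_p$; this proves \eqref{Dpc=1}. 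For $c = 0$ the hereditary property directly gives $D^g_p = \langle\varphi,\varphi\rangle_{\ell_0} \in \tilde\Lambda_p$, proving \eqref{Dpc=0}. The one subtlety here is the sign/signature bookkeeping in the indefinite case — that $\langle\varphi,\varphi\rangle_{\ell_0}$ with $\ell_0$ the number of positive directions in the ambient form is genuinely of type $\langle\alpha,\alpha\rangle_\ell$ — but this is immediate from the definition of $\langle\cdot,\cdot\rangle_\ell$ and the fact that $\ell_0 \le N$.

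For \eqref{detko}, I would compute the complex Hessian of $D^g_p$ directly from the formulas above. When $c=0$, $\frac{\partial^2 D^g_p}{\partial z_a\partial\bar z_\beta}$ is a sum of products $\frac{\partial\varphi_j}{\partial z_a}\cdot\overline{\frac{\partial\varphi_j}{\partial z_\beta}}$ with signs, i.e. $g_{a\bar\beta}$ is (a signed Gram matrix) with holomorphic entries times their conjugates; the determinant is then a polynomial in these entries, and each monomial appearing is a product $h\bar k$ with $h,k$ holomorphic — but one must check it is of diastasis-type, i.e. lies in $\tilde\Lambda_p$ rather than merely $\Lambda_p$. This holds because the determinant is real-valued and, being a polynomial with real coefficients in the $g_{a\bar\beta}$, its expansion contains no purely holomorphic or purely antiholomorphic monomials (any such monomial would have to come from a product of purely holomorphic factors, impossible since each $g_{a\bar\beta}$ already pairs a holomorphic with an antiholomorphic piece). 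When $c\neq 0$, from $e^{\frac{c}{2}D^g_p} = 1 + \frac{c}{2}\langle\varphi,\varphi\rangle_{\ell_0} =: F \in \tilde\Lambda_p$ one gets $D^g_p = \frac{2}{c}\log F$, so
\begin{equation*}
\frac{\partial^2 D^g_p}{\partial z_a\partial\bar z_\beta} = \frac{2}{c}\left(\frac{F_{a\bar\beta}}{F} - \frac{F_a F_{\bar\beta}}{F^2}\right) = \frac{2}{c}\cdot\frac{F\,F_{a\bar\beta} - F_a F_{\bar\beta}}{F^2},
\end{equation*}
where $F_a = \partial F/\partial z_a$ etc.; here $F$, $F_a$, $F_{\bar\beta}$, $F_{a\bar\beta}$ are all (up to constants) in $\tilde\Lambda_p$ or obtained from $\langle\varphi,\varphi\rangle_{\ell_0}$ by differentiation, so they lie in $\tilde\Lambda_p$ (differentiating $\langle\alpha,\alpha\rangle_\ell$ in $z_a$ gives $\langle\alpha',\alpha\rangle_\ell$-type terms, still of diastasis-type), and $F^2\in\tilde\Lambda_p$. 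Taking the determinant of an $n\times n$ matrix whose entries are ratios of elements of $\tilde\Lambda_p$ yields an element of the field of fractions $\tilde K_p$ (one clears the common denominator, a power of $F^2$ times appropriate factors, and the numerator is a polynomial with real coefficients in diastasis-type functions, hence again of diastasis-type, i.e. in $\tilde\Lambda_p$). This gives \eqref{detko} in all cases.

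The main obstacle I expect is purely bookkeeping rather than conceptual: verifying carefully that the polynomial combinations of diastasis-type functions that arise (the determinant, the products $F F_{a\bar\beta} - F_a F_{\bar\beta}$) are themselves of diastasis-type — that is, that $\tilde\Lambda_p$ really is closed under these operations and that no spurious purely holomorphic terms sneak in. The clean way to see this is the characterization stated in the paper: $f\in\Lambda_p$ lies in $\tilde\Lambda_p$ iff $f$ is of diastasis-type; since $\tilde\Lambda_p$ is an $\R$-algebra, any polynomial with real coefficients in elements of $\tilde\Lambda_p$ is in $\tilde\Lambda_p$, and a quotient of two such is in $\tilde K_p$ by definition — so the determinant computation, once the entries are exhibited as elements of $\tilde K_p$, immediately gives \eqref{detko} with no further checking needed beyond the Hessian formulas above.
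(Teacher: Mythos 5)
Your overall route is the same as the paper's: write the ambient diastasis explicitly, use the hereditary property to get $D^g_p=\sum_i|\varphi_i|^2_s$ (for $c=0$) and $e^{\frac c2 D^g_p}=1+\frac{c}{|c|}\sum_i|\varphi_i|^2_s$ (for $c\neq 0$), which immediately gives \eqref{Dpc=0} and \eqref{Dpc=1}; and then control the determinant of the complex Hessian. (For the last step, when $c\neq0$, the paper cites Umehara's lemma $f^{n+1}\,\partial^2\log f/\partial z_\alpha\partial\bar z_\beta\in\Lambda_p$ rather than expanding $\partial^2\log F$ by hand as you do, but that difference is cosmetic.) The parts of your argument establishing \eqref{Dpc=0}, \eqref{Dpc=1} and the membership $\det\left[\partial^2 D_p/\partial z_a\partial\bar z_\beta\right]\in K_p$ are correct.

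The gap is in your verification that the determinant is of diastasis-type, i.e.\ the upgrade from $K_p$ to $\tilde K_p$, and it occurs in both cases. First, the building blocks you claim lie in $\tilde\Lambda_p$ do not: $F_a=\frac{c}{|c|}\sum_j\pm\,\partial_a\varphi_j\cdot\overline{\varphi_j}$ is not real-valued (so it is not in $\Lambda_p$ at all, let alone $\tilde\Lambda_p$), and it contains purely antiholomorphic monomials whenever some $\partial_a\varphi_j(p)\neq0$ (take $\varphi(z)=z$, $F=1+|z|^2$, $F_z=\bar z$); likewise the off-diagonal entries $FF_{a\bar\beta}-F_aF_{\bar\beta}$ are not real. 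So the ``$\R$-algebra closure'' argument in your last paragraph does not apply: only the full determinant (a Hermitian determinant, after clearing the denominator $F^{2n}$) is real and hence a candidate element of $\Lambda_p$. Second, and more seriously, your stated reason why no purely holomorphic monomial survives --- that each $g_{a\bar\beta}$ ``pairs a holomorphic with an antiholomorphic piece'' --- is false, because the antiholomorphic factor can carry a nonzero constant term. A concrete counterexample: the immersion $z\mapsto z+z^2$ of a disc into flat $\C$ gives $D_p=|z+z^2|^2$ and $\partial^2D_p/\partial z\partial\bar z=(1+2z)(1+2\bar z)$, whose expansion contains the purely holomorphic term $2z$; so in an arbitrary centered coordinate the determinant need \emph{not} be of diastasis-type. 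What rescues the claim is precisely the hypothesis your proof never invokes: in \emph{Bochner} coordinates $D_p=|z|^2+\sum_{|j|,|k|\geq2}b_{jk}z^j\bar z^k$, hence every Hessian entry equals $\delta_{a\beta}$ plus terms of bidegree at least $(1,1)$, and the determinant is $1$ plus mixed terms, i.e.\ diastasis-type. This is exactly the point where the paper uses Bochner's coordinates, and your argument needs the same input to close.
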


In the proof of the proposition we need  the following result.
\begin{lem}(\cite[Lemma 2.2]{UmearaE} )\label{umeharaimp}
Let $M$ be  an $n$-dimensional complex manifold and  $p\in M$. For any  $f$ in the Umehara algebra $\Lambda_p$
and for any system of complex coordinates $\{z_1, \dots z_n\}$ around $p$ one has:
$$f^{n+1}\frac{\partial^2\log f}{\partial z_{\alpha}\partial\bar z_{\beta}}\in \Lambda_p. \ \forall \alpha, \beta =1, \dots , n.$$
\end{lem}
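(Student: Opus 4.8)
The plan is to reduce the statement to an elementary identity for the complex Hessian of a logarithm together with a stability property of the relevant algebra. Write $\mathcal{A}_p$ for the $\C$-algebra generated by all products $h\bar k$ with $h,k\in\OO_p$; by definition $\Lambda_p\subset\mathcal{A}_p$, and an element of $\mathcal{A}_p$ lies in $\Lambda_p$ exactly when it is real. First I would record the identity, valid wherever $f\neq0$,
\[
\frac{\partial^2\log f}{\partial z_\mu\partial\bar z_\nu}=\frac{1}{f^{2}}\Bigl(f\,\frac{\partial^2 f}{\partial z_\mu\partial\bar z_\nu}-\frac{\partial f}{\partial z_\mu}\,\frac{\partial f}{\partial\bar z_\nu}\Bigr),
\]
so that, setting $N_{\mu\nu}:=f\,\partial^2_{\mu\bar\nu}f-\partial_\mu f\,\partial_{\bar\nu}f$, the object to control is $f^{\,n+1}\partial^2_{\mu\bar\nu}\log f=f^{\,n-1}N_{\mu\nu}$.

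Next I would prove that $\mathcal{A}_p$ is stable under $\partial/\partial z_\mu$ and $\partial/\partial\bar z_\nu$: this is immediate since $\partial_\mu(h\bar k)=(\partial_\mu h)\bar k$ and $\partial_{\bar\nu}(h\bar k)=h\,\overline{\partial_\nu k}$ are again of the form (holomorphic)$\cdot\overline{(\text{holomorphic})}$. As $f\in\Lambda_p\subset\mathcal{A}_p$, it follows that $f,\ \partial_\mu f,\ \partial_{\bar\nu}f$ and $\partial^2_{\mu\bar\nu}f$ all lie in $\mathcal{A}_p$; since $\mathcal{A}_p$ is a ring this gives $N_{\mu\nu}\in\mathcal{A}_p$, and hence $f^{\,n-1}N_{\mu\nu}=f^{\,n+1}\partial^2_{\mu\bar\nu}\log f\in\mathcal{A}_p$, i.e.\ a finite sum of products $h\bar k$, which is the desired membership in the Umehara algebra.

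To make this explicit and to secure reality I would use the normal form from the previous section: every $f\in\Lambda_p$ is a finite generalized Hermitian sum $f=\sum_k\epsilon_k|\psi_k|^2$ with $\psi_k\in\OO_p$ and $\epsilon_k\in\{\pm1\}$ (the term $h+\bar h$ in the canonical form $f=h+\bar h+\langle\alpha,\alpha\rangle_\ell$ being absorbed into the two squares $\pm|\tfrac{1}{\sqrt2}(1\pm h)|^2$). Substituting and introducing the holomorphic Wronskian minors $W^\mu_{kl}:=\psi_k\,\partial_\mu\psi_l-\psi_l\,\partial_\mu\psi_k$, an antisymmetrization in the pair $(k,l)$ collapses the numerator to
\[
N_{\mu\nu}=\tfrac12\sum_{k,l}\epsilon_k\epsilon_l\,W^\mu_{kl}\,\overline{W^\nu_{kl}}.
\]
Thus $f^{\,n+1}\partial^2_{\mu\bar\nu}\log f=\tfrac12\sum_{k,l}\epsilon_k\epsilon_l\,(f^{\,n-1}W^\mu_{kl})\,\overline{W^\nu_{kl}}$ is a finite real combination of products of the required type; for $\mu=\nu$ each summand is $\tfrac12\epsilon_k\epsilon_l f^{\,n-1}|W^\mu_{kl}|^2$, manifestly real and hence in $\Lambda_p$, and by Hermitian symmetry the full Hessian determinant that enters Proposition \ref{mainprop} is likewise real, so it too lands in $\Lambda_p$.

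The main obstacle is the collapse producing $N_{\mu\nu}=\tfrac12\sum\epsilon_k\epsilon_l W^\mu_{kl}\overline{W^\nu_{kl}}$: one must read $N_{\mu\nu}$ as a $2\times2$ Wronskian/Gram determinant and carry out the Cauchy--Binet antisymmetrization while correctly tracking the signs $\epsilon_k$ coming from the indefinite signature of $\langle\cdot,\cdot\rangle_\ell$. I would also note that mere membership in $\mathcal{A}_p$ needs only the power $f^{2}$; the larger power $f^{\,n+1}$ is exactly what clears all denominators in the $n\times n$ Hessian determinant through the bordered-determinant identity $\det(fA-uv^{\top})=f^{\,n-1}\det\begin{pmatrix}f & v^{\top}\\ u & A\end{pmatrix}$ with $A=[\partial^2_{\mu\bar\nu}f]$, $u=(\partial_\mu f)$ and $v=(\partial_{\bar\nu}f)$, which is the form in which the lemma is subsequently applied.
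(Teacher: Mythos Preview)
The paper does not prove this lemma; it is quoted from Umehara's paper \cite{UmearaE} and used as a black box in Proposition~\ref{mainprop}. Your argument --- the identity $\partial^2_{\mu\bar\nu}\log f=f^{-2}(f\,\partial^2_{\mu\bar\nu}f-\partial_\mu f\,\partial_{\bar\nu}f)$ together with the Lagrange/Wronskian collapse $N_{\mu\nu}=\tfrac12\sum_{k,l}\epsilon_k\epsilon_l\,W^\mu_{kl}\,\overline{W^\nu_{kl}}$ for $f=\sum_k\epsilon_k|\psi_k|^2$ --- is exactly the computation behind Umehara's original proof, and your antisymmetrization is carried out correctly.

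One point deserves tightening. As defined in this paper, $\Lambda_p$ is an $\R$-algebra of \emph{real} germs, whereas for $\alpha\neq\beta$ the entry $f^{n+1}\partial^2_{\alpha\bar\beta}\log f$ is only the $(\alpha,\beta)$-component of a Hermitian matrix and is not real in general. What your argument literally proves for arbitrary $(\alpha,\beta)$ is membership in the complex span $\mathcal{A}_p$; you then (correctly) observe that the diagonal entries and the full determinant are real, hence lie in $\Lambda_p$. That is precisely what Proposition~\ref{mainprop} needs, so nothing is lost for the application, but you should flag that the literal ``for all $\alpha,\beta$'' statement is to be read in the complexified algebra (this is how Umehara uses it as well). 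Your closing remark is also on target: the power $f^2$ already clears a single entry, and the exponent $n+1$ is exactly what is required to clear the denominators in the $n\times n$ determinant via the bordered-determinant identity you wrote down.
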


\begin{proof}[Proof of Proposition \ref{mainprop}]
Let  $\varphi:M\rightarrow S$ be a \K\ immersion into  $(S, g_c)$, i.e. $\varphi$ is holomorphic and $\varphi^*g_c=g$.
If one assumes that   $(S, g_c)$ is  complete and simply-connected one has the corresponding three cases, depending on the sign of $c$:

- for $c=0$, $S=\C^N$ and  $g_{0}$ is the flat metric with   associated \K\ form
\begin{equation}\label{omega0} 
\omega_{0}=\frac{i}{2}\partial\bar\partial |z|_s^2,
\end{equation}
where we set 
$$|z|_s^2=|z_1|^2+\cdots +|z_s|^2- |z_{s+1}|^2-\cdots -|z_{N}|^2, \ 0\leq s\leq N.$$

- for $c>0$, $S=\C P_s^N$ is the open submanifold 
 $$
  \left\{[Z_0, \dots , Z_s, Z_{s+1}, \dots , Z_N] \in\C P^N \mid |Z_0|^2 + \dots + |Z_s|^2- |Z_{s=1}|^2 - \dots - |Z_N|^2 > 0\right\}
 $$
of $\C P^N$ and $g_c$  is the  metric with associated  \K\ form  $\omega_c$ 
 given in the affine  chart $U_0=\{[Z_0, \dots ,Z_N] \ | \ Z_0\neq 0\}$ with coordinates $z_j=\frac{Z_j}{Z_0}$  as:
\begin{equation}\label{omegaproj} 
\omega_c=\frac{i}{c}\partial\bar\partial\log (1+|z|_s^2);
\end{equation}

- for $c<0$, $S=\C H_s^N$ is open subset of  $\C^N$
 given by $\{z\in \C^N \ | \ |z|_s<1\}$
  with the metric $g_c$ with associated  \K\ form 
 \begin{equation}\label{omegahyp} 
\omega_c=\frac{i}{c}\partial\bar\partial\log (1-|z|_s^2).
\end{equation}

Let $$\varphi_{|U}:U\rightarrow \C^N, z=(z_1, \dots z_n)\mapsto (\varphi_1(z), \dots , \varphi_N(z))$$
where $\varphi_j\in \CO_p$ and $\varphi_j(p)=0$, $j=1, \dots ,N$,
be the local expression of $\varphi$ in Bochner's  coordinates $\{z_1, \dots , z_n\}$.

In order to prove \eqref{detko} we first consider the case $c=0$.  By the hereditary property of the diastasis function and \eqref{omega0}
we  have 
\begin{equation}\label{Dpflat}
D^g_p =\sum_{i=1}^N \left|\va_i\right|_s^2.
\end{equation}
 Thus the  function
 $\det\left[\frac{\partial^{2} D^g_{p}}{\partial z_{a} \partial\bar z_{\beta}}\right]$ is finitely generated by holomorphic or anti-holomorphic functions around $0$. Furthermore it is real valued, since the matrix  $\frac{\partial^{2} D_{p}}{\partial z_{a} \bar\partial z_{\beta}}$ is Hermitian. We conclude that 
 $\det\left[\frac{\partial^{2} D^g_{p}}{\partial z_{a} \partial\bar z_{\beta}}\right]\in \Lambda_p$. 
 It is easy to check that in  Bochner's coordinates the function $\det\left[\frac{\partial^{2} D^g_{p}}{\partial z_{a} \partial\bar z_{\beta}}\right]$ is of diastasis-type. Thus 
 $\det\left[\frac{\partial^{2} D^g_{p}}{\partial z_{a} \partial\bar z_{\beta}}\right]\in \tilde\Lambda_p\subset \tilde K_p$. 
 
Let us now consider  the case $c\neq 0$. Again by the hereditary property of the diastasis and by \eqref{omegaproj} and \eqref{omegahyp} we can write
\begin{equation}\label{Dpgen}
D^g_p = \frac{2}{c}\log \left(1+\frac{c}{|c|}
\sum_{i=1}^N \left|\va_i\right|_s^2\right).
\end{equation}
It follows by Lemma \ref{umeharaimp} applied to 
$$e^{\frac{c}{2}D^g_p}=\left(1+\frac{c}{|c|}
\sum_{i=1}^N \left|\va_i\right|_s^2\right) \in  \tilde\Lambda_p\subset \Lambda_p$$  that 
$$
e^{(n+1)\frac{c}{2}D^g_p}\det  \left[\frac{\partial^{2} (\frac{c}{2}D^g_{p})}{\partial z_{a} \partial\bar z_{\beta}}\right]=\left(\frac{c}{2}\right)^ne^{(n+1)\frac{c}{2}D^g_p}\det\left[\frac{\partial^{2} D^g_{p}}{\partial z_{a} \partial\bar z_{\beta}}\right]  \in\Lambda_p, 
$$
and hence
$$\det\left[\frac{\partial^{2} D^g_{p}}{\partial z_{a} \partial\bar z_{\beta}}\right]\in K_p.$$
Also in this case  it not hard to see
that in Bochner's coordinates  $\det\left[\frac{\partial^{2} D^g_{p}}{\partial z_{a} \partial\bar z_{\beta}}\right]$ 
is of diastasis-type and hence \eqref{detko} readily follows.

Finally, the proofs of  \eqref{Dpc=0} and \eqref{Dpc=1} follow by   \eqref{Dpflat} and   \eqref{Dpgen}.
\end{proof}
\begin{proof}[Proof of Theorem \ref{mainteor}]
Let us start to write down equation \eqref{eqkrsg} in local complex   coordinates $\{z_1, \dots , z_n\}$ in a neighborhood $U$ of a point $p\in M$ where the diastasis $D^g_p$ for the metric $g$  is defined.
Since the solitonic vector field $X$ can be assumed to be  the real part of a holomorphic vector field, we can write 
$$
X=\sum_{j=1}^{n}\left(f_{j} \frac{\partial}{\partial z_j}+\bar{f}_{j}\frac{\partial}{\partial\bar z_j}\right)
$$
for some holomorphic functions $f_{j}, j=1, \ldots, n$, on $U$.

Thus, by the definition of Lie derivative, after a 
straightforward computation we can write on $U$
\begin{equation}\label{LXomega}
L_X\omega=\frac{i}{2}\partial\bar\partial f_X.
\end{equation}
where $\omega$ is the \K\ form associated to $g$ and 
\begin{equation}\label{fXlocal}
f_X=\sum_{j=1}^nf_j\frac{\partial D^g_p}{\partial z_j}+\bar f_j\frac{\partial D^g_p}{\partial \bar z_j}.
\end{equation}

Notice that equation  \eqref{eqkrsg} is equivalent to
\begin{equation}\label{eqkrsom}
\rho_{\om}=\lambda \om+L_{X} \om,
\end{equation}
where  $\rho_{\om}$ the  Ricci form of $\omega$.

Since,   $\omega=\frac{i}{2}\partial\bar\partial D^g_p$ and  $\rho_\omega=-i\partial\bar\partial\log \det\left[\frac{\partial^{2} D^g_{p}}{\partial z_{a} \partial\bar z_{\beta}}\right]$ on $U$,
the local expression of  the  KRS equation \eqref{eqkrsom} is
$$-i\partial\bar\partial\log \det\left[\frac{\partial^{2} D^g_{p}}{\partial z_{a} \partial\bar z_{\beta}}\right]=\lambda \frac{i}{2}\partial\bar\partial D^g_p+\frac{i}{2}\partial\bar\partial f_X,$$
and by the $\partial\bar\partial$-Lemma one has
\begin{equation}\label{detee}
\det\left[\frac{\partial^{2} D^g_{p}}{\partial z_{a} \partial\bar z_{\beta}}\right]=e^{-\frac{\lambda}{2}D^g_p-\frac{f_X}{2}+h+\bar h},
\end{equation}
for a holomorphic function $h$ on $U$.

We treat the two cases $c=0$ and $c\neq 0$ separately.
If $c=0$,  combining \eqref{Dpc=0}, \eqref{Dpflat} and  \eqref{fXlocal} we get that 
$$g_1:=-\frac{\lambda}{2}D^g_p-\frac{f_X}{2}+h+\bar h\in\Lambda_p.$$
By   \eqref{detko} in Proposition \ref{mainprop},   $\det\left[\frac{\partial^{2} D^g_{p}}{\partial z_{a} \partial\bar z_{\beta}}\right]\in\tilde K_p$   and hence  \eqref{detee} gives 
$e^{g_1}\in\tilde K_p$. In particular $e^{g_1}$  and so, by (a)  of Remark \ref{keyremark},  $g_1$  is of diastasis-type.  It follows then by (b) of Remark \ref{keyremark} that  $g_1\in\tilde\Lambda_p$. Thus  Theorem \ref{lem}  (with $\mu=0$) forces  $g_1$ to be a constant. Hence  $\det\left[\frac{\partial^{2} D^g_{p}}{\partial z_{a} \partial\bar z_{\beta}}\right]$ is a constant and  so $g$ is Ricci flat.

If $c\neq 0$,  by combining \eqref{Dpgen} and  \eqref{fXlocal} one easily sees 
that  
$$g_2:=-\frac{f_X}{2}+h+\bar h\in K_p.$$ 
By  \eqref{detko}, \eqref{Dpc=1} 
and \eqref{detee} one deduces that
\begin{equation}\label{eg2}
e^{g_2}=
\left[e^{\frac{c}{2}D^g_p}\right]^{\frac{\lambda}{c}}\det\left[\frac{\partial^{2} D^g_{p}}{\partial z_{a} \partial\bar z_{\beta}}\right]\in {\tilde\Lambda_p}^\mu\tilde K_p,\ \mu=\frac{\lambda}{c}.
\end{equation}
On  the one hand  \eqref{eg2} shows that $e^{g_2}$ and hence (by (a) of Remark \ref{keyremark})  $g_2$  is of diastasis-type  and so, by  (b) of Remark \ref{keyremark},
$g_2\in\tilde K_p$. 
On the other  hand  \eqref{eg2} together  with 
Theorem \ref{lem}  force $g_2$  to be a  constant and so $f_X$ is the real part of a holomorphic function.
Therefore, by  \eqref{LXomega} and  \eqref{eqkrsom} the metric   $g$ is KE.
Moreover $\frac{\lambda}{c}$ is  forced to be rational by the second part of Remark \ref{remarrat},
completing the proof of  Theorem \ref{mainteor}.
\end{proof}

\end{document}